\newcommand{\comment}[1]{}
\newif\ifpdf
\newtheorem{thm}{Theorem}[section]
\newtheorem{corollary}[thm]{Corollary}
\newtheorem{lemma}[thm]{Lemma}
\newtheorem{definition}[thm]{Definition}
\newtheorem{theorem}[thm]{Theorem}
\newtheorem{remark}[thm]{Remark}
\begin{document}

\title{Connectivity of 2-distance graphs}
\author{{\small S.H. Jafari$^{a}$, S.R. Musawi}$^{b}$\\
\\{\small $^{a}$Faculty of Mathematical Sciences, Shahrood University of Technology,}\\ {\small  P.O. Box 36199-95161, Shahrood, Iran}\\
{\small Email: shjafari@shahroodut.ac.ir, shjafari55@gmail.com}\\
\\{\small $^{b}$Faculty of Mathematical Sciences, Shahrood University of Technology,}\\ {\small  P.O. Box 36199-95161, Shahrood, Iran}\\
{\small Email: r\_musawi@shahroodut.ac.ir, r\_musawi@yahoo.com}\\
}
\date{}
\maketitle

\begin{abstract}
For a simple graph $G$, the $2$-distance graph, $D_2(G)$, is a graph with the  vertex set $V(G)$ and two vertices are adjacent if and only if their distance is $2$ in the graph $G$. In this paper, we characterize all graphs with connected 2-distance graph. For graphs with diameter 2, we prove that $D_2(G)$ is connected if and only if $G$ has no spanning complete bipartite subgraphs. For graphs with  a diameter greater than 2, we define a maximal   Fine set and by contracting $G$ on these subsets, we get a new graph $\hat G$  such that $D_2(G)$ is connected if and only if $D_2(\hat G)$ is connected. Especially, $D_2(G)$ is disconnected if and only if $\hat G$ is bipartite.
\end{abstract}

Keywords: 2-distance graph, power graph, diameter, connectivity\\
Mathematics Subject Classification : 05C12, 05C15

\section{introduction}

In this paper, we only consider the undirected finite simple graphs $G = (V,E)$.
 The distance between two vertices of $x,y\in V(G)$, $d(x,y)$, is the length of the shortest path between them. The diameter of $G$, $diam(G)$, is the maximum distance between vertices of $G$.
The graph $G$ is a bipartite graph if $V(G)$ be the union of two disjoint non-empty sets $A$ and $B$ where every edge connects a vertex of $A$ to a vertex of $B$.
It is well known that:
A graph $G$ is bipartite if and only if $G$ has no odd cycle.
The contracted graph of the partition $\{A_1, \cdots , A_t\}$ of  $V(G)$ is the graph $G_1$ with vertex set $V(G_1)=\{A_1, \cdots , A_t\}$ and two vertices $A_i$ and $A_j$  are adjacent if there is an edge $xy\in E(G)$ such that $x\in A_i$ and $y\in A_j$.

For a positive integer $k$, the $k$-power of $G$, denoted by $G^k$, is the graph with the vertex set $V(G^k)=V(G)$ and two vertices $x$ and $y$ are adjacent if and ony if $d(x,y)\leqslant k$.
$2$-distance graph of a graph $G$, $D_2(G)$, is a graph with the  vertex set $V(G)$ and two vertices are adjacent if and only their distance is $2$ in the graph $G$. 
One can see that for any graph $G$, $E(D_2(G))=E(G^2)-E(G)$.

The idea of 2-distance graphs is a particular case of a general notion of k-distance
graphs, which was first studied by Harary et al. \cite{hhk}. They investigated the connectedness
of 2-distance graph of some graphs. In the book by Prisner  \cite[157-159]{p}, the dynamics
of k-distance operator was explored. Furthermore, Boland et al. \cite{bhl} extended the
k-distance operator to a graph invariant they call distance-n domination number.
To solve one problem posed by Prisner \cite[Open Problem 29]{p}, Zelinka
\cite{z} constructed a graph that is r-periodic under the 2-distance operator for each
positive even integer r. Prisner’s problem was completely solved by Leonor in her
master’s thesis \cite{l}, wherein she worked with a graph construction different from
that of Zelinka’s.
Recently, Azimi and Farrokhi \cite{af2} studied all graphs whose 2-distance graphs
have maximum degree 2. They also solved the problem of finding all graphs whose
2-distance graphs are paths or cycles. More recently, the same authors \cite{af1} determined
all finite simple graphs with no square, diamond, or triangles with a common
vertex that are self 2-distance graphs (that is, graphs G satisfying the equation
$D^2(G)=G$. They further showed the nonexistence of cubic self 2-distance
graphs.
In \cite{c}, Ching   gives some characterizations of 2-distance graphs and finds all graphs X such that $D_2(X)=kP_2$
or $K_m \cup K_n$, where $k \geqslant 2$ is an integer, $P_2$ is the path of order $2$, and $K_m$ is the complete graph of order $m \geqslant 1$. 
In \cite{k} Khormali gives some conditions for connectivity $D_k(G)$. For $D_2(G)$, he proves that if $G$ has no odd cycle, then $D_2(G)$ is disconnected.

In this paper, we charactrize all graphs with connected 2-distance graph.
For graphs with diameter 2, we prove that $D_2(G)$ is connected if and only if $G$ has no spanning complete bipartite subgraphs. For graphs with diameter greater than 2, we define a maximal   Fine set and by contracting $G$ on this subsets, we get a new graph $\hat G$  such that $D_2(G)$ is connected if and only if $D_2(\hat G)$ is connected. Specially, $D_2(G)$ is disconnected if and only if $\hat G$ is bipartite.

\section{Cinnectivity of $D_2(G)$}

Let $G$ be a graph with $diam(G)=2$, then one can see that $D_2(G)=G^{\prime}$. Therefore, the connectivity of $D_2(G)$ come back to the connectivity of $G^{\prime}$. Thus, we have the following theorem which charactrized all gaphs with diameter 2 and connected 2-distance graph.
\begin{theorem}
Let $G$ be a graph with $diam(G)=2$, then $D_2(G)$ is disconnected if and only if $G$ has a spannig complete bipartite subgraph.
\end{theorem}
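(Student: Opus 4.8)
The plan is to exploit the observation already recorded above, that $D_2(G)=G^{\prime}$ whenever $\mathrm{diam}(G)=2$, and then to reduce the statement to the classical fact that the complement of a graph is disconnected exactly when the graph decomposes as a join. Concretely, since $G$ is connected with $\mathrm{diam}(G)=2$, any two distinct vertices are at distance $1$ or $2$, so a pair is at distance $2$ in $G$ if and only if it is a non-edge of $G$; hence $E(D_2(G))=E(G^{\prime})$ and $D_2(G)=G^{\prime}$. Thus it suffices to prove: $G^{\prime}$ is disconnected if and only if $G$ has a spanning complete bipartite subgraph.

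For the forward direction, assume $G^{\prime}$ is disconnected. Let $C$ be a connected component of $G^{\prime}$, put $A=V(C)$ and $B=V(G)\setminus A$, and note that $A$ and $B$ are non-empty and partition $V(G)$. Since $G^{\prime}$ has no edge joining $A$ to $B$, every pair $\{a,b\}$ with $a\in A$ and $b\in B$ is an edge of $G$; therefore the subgraph of $G$ with vertex set $V(G)$ and edge set $\{ab : a\in A,\ b\in B\}$ is a spanning complete bipartite subgraph with parts $A$ and $B$.

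For the converse, suppose $G$ has a spanning complete bipartite subgraph with parts $A$ and $B$; here $A,B$ are non-empty, partition $V(G)$, and every $A$–$B$ pair is an edge of $G$, hence a non-edge of $G^{\prime}$. Then $G^{\prime}=D_2(G)$ has no edge between $A$ and $B$, and since both parts are non-empty, $D_2(G)$ is disconnected. I do not expect any genuine obstacle here: the only point requiring a word of care is that the bipartition involved is into two \emph{non-empty} classes in both directions, which is automatic because connected components and the two sides of a complete bipartite graph are non-empty by definition; the rest is the routine duality between "disconnected complement'' and "join decomposition.''
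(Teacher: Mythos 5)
Your proposal is correct and takes essentially the same route as the paper: the paper simply notes that $D_2(G)=G^{\prime}$ (the complement) when $diam(G)=2$ and invokes the standard duality between a disconnected complement and a spanning complete bipartite (join) decomposition, which is exactly what you prove. You have merely written out in full the details the paper leaves implicit, and your argument is sound in both directions.
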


By above theorem, from now on we focus on connected graphs with diameter at least $3$.
Fiest, we introduce a fmily of graphs with connected $2$-distance graph which play the main role in our proofs.

\begin{definition}
The family $\mathcal{H}$ of graphs contains all odd cycles with degree greater than 4 and following three special graphs.
\end{definition}
\begin{figure}[h]\label{fig0}
\begin{center}
\begin{tikzpicture}
\draw (1,0)--(1,1);
\draw (2,0)--(1,0)--(0,0.5)--(1,1)--(2,1);
\draw (5,0)--(5,1)--(4,0.5)--(5,0)--(6,0)--(6,1)--(5,1);
\filldraw (0,0.5) circle(2pt);\filldraw (1,0) circle(2pt);\filldraw (2,0) circle(2pt);
\filldraw (2,1) circle(2pt);\filldraw (1,1) circle(2pt);
\filldraw (5,0) circle(2pt);\filldraw (5,1) circle(2pt);\filldraw (4,0.5) circle(2pt);
\filldraw (6,0) circle(2pt);\filldraw (6,1) circle(2pt);
\draw(8,0)--(9,0)--(10,0)--(11,0)--(10.5,-1)--(10,0);
\draw (8,0)--(9.5,1)--(9,0);\draw (10,0)--(9.5,1)--(11,0);
\filldraw (8,0) circle(2pt);\filldraw (9,0) circle(2pt);\filldraw (10,0) circle(2pt);
\filldraw (11,0) circle(2pt);\filldraw (9.5,1) circle(2pt);
\filldraw (10.5,-1) circle(2pt);
\end{tikzpicture}
\end{center}
\end{figure}

The following results is trivial.
\begin{lemma}\label{23}
\begin{enumerate}
\item
If $G\in\mathcal{H}$, then $D_2(G)$ is connected.
\item
If $G$  be a connected graph and $|V(G)|\leqslant4$, then $D_2(G)$ is disconnected.
\item
Let $G$  be a connected graph and $|V(G)|=5$. $D_2(G)$ is connected iff $G\in\mathcal{H}$.
\end{enumerate}
\end{lemma}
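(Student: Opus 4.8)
\emph{Proof plan.} All three parts are finite verifications, so the task is mainly to organise the cases economically and to quote the preceding theorem where it saves work.

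For part (1), consider first an odd cycle $C_n$ with $n\ge 5$. For every $n\ge 4$ two vertices of $C_n$ lie at distance $2$ exactly when their indices differ by $\pm 2$ modulo $n$, so $D_2(C_n)$ is the circulant graph on $\mathbb{Z}_n$ with connection set $\{2,n-2\}$. When $n$ is odd we have $\gcd(2,n)=1$, hence this circulant is again a single $n$-cycle and in particular connected. For the three exceptional graphs in $\mathcal H$ I would just list the distance-$2$ pairs; a short computation shows that in each case $D_2(G)$ is a Hamiltonian path ($P_5$ for the two $5$-vertex graphs and $P_6$ for the $6$-vertex one), hence connected.

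For part (2) we may assume $2\le|V(G)|\le4$, the one-vertex case being trivial. If $G$ is complete then $D_2(G)$ is edgeless, hence disconnected. If $diam(G)=2$, then by the preceding theorem it suffices to exhibit a spanning complete bipartite subgraph, which is immediate for each relevant graph ($P_3$, the paw, $C_4$, $K_{1,3}$, $K_4-e$): a vertex of degree $1$ together with its non-neighbours always yields a spanning star, and in $C_4$ and $K_4-e$ one reads off a spanning $K_{2,2}$. The only connected graph on at most $4$ vertices with $diam(G)\ge3$ is $P_4$, and $D_2(P_4)=2K_2$. In every case $D_2(G)$ is disconnected. For part (3), the direction $G\in\mathcal H\Rightarrow D_2(G)$ connected is part (1) specialised to the members of $\mathcal H$ on $5$ vertices, namely $C_5$, the bull (a triangle with a pendant edge at each of two of its vertices), and $C_5$ with one chord, so it is already done. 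For the converse I would run through the $21$ connected graphs on $5$ vertices. Those of diameter $2$ are dispatched by the preceding theorem: $D_2(G)$ is connected iff $G$ has neither a universal vertex (no spanning $K_{1,4}$) nor a partition $V(G)=\{x,y\}\cup\{a,b,c\}$ realising all six crossing edges (no spanning $K_{2,3}$); scanning the list, the only survivors are $C_5$ and $C_5$ with one chord. For the finitely many connected $5$-vertex graphs of diameter $\ge3$ one computes $D_2$ directly — for instance $D_2(P_5)=P_3\cup K_2$ — and finds that the only one with connected $D_2$ is the bull. Together the two subcases give exactly the $5$-vertex members of $\mathcal H$.

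The plan has no genuine obstacle: everything reduces to computing $D_2$ of small graphs. The only place where care is needed is the enumeration in part (3), where one must not overlook a diameter-$2$ graph that happens to have no spanning complete bipartite subgraph. Pre-filtering by the two conditions ``$G$ has a universal vertex'' and ``$G$ has a spanning $K_{2,3}$'' (each already forcing $D_2(G)$ disconnected), and then organising the remaining graphs by degree sequence, keeps the casework short and reliable.
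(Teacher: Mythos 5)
Your proposal is correct, and since the paper offers no argument for this lemma (it is simply declared trivial), your finite verification --- checking $D_2(C_n)\cong C_n$ for odd $n\ge 5$, computing $D_2$ of the three exceptional graphs (paths $P_5$, $P_5$, $P_6$), and pruning the $5$-vertex enumeration via Theorem 2.1 (for $diam(G)=2$, $D_2(G)$ is the complement, so a universal vertex or a spanning $K_{2,3}$ kills connectivity) --- is exactly the intended, and essentially the only, route. One pedantic caveat: in part (2) the one-vertex case is not ``trivially disconnected'' but actually connected under the usual convention ($D_2(K_1)=K_1$), so the statement implicitly assumes $|V(G)|\ge 2$; this is a defect of the paper's formulation rather than of your argument, but you should say so rather than wave it off.
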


\begin{lemma}\label{24}
Let $G$ be a connected graph. If $H\in\mathcal{H}$ be an induced subgraph of $G$  such that $V(H)\nsubseteq N(x)$ for any vertex $x\in V(G)$, then $D_2(G)$ is connected.
\end{lemma}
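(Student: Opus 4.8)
\textbf{Setup and strategy.} The plan is to show that the whole vertex set $V(G)$ lies in a single connected component of $D_2(G)$, given the induced copy of $H\in\mathcal H$ with $V(H)\not\subseteq N(x)$ for every $x$. By Lemma~\ref{23}(1), $D_2(H)$ is connected; the first step is to upgrade this to: \emph{all vertices of $H$ lie in one component of $D_2(G)$.} This is not automatic, because two vertices at distance $2$ in $H$ might be adjacent in $G$ (a chord), hence \emph{not} adjacent in $D_2(G)$. However, each graph in $\mathcal H$ is either an odd cycle $C_{2k+1}$ with $k\ge2$, or one of the three small graphs in Figure~1; for each of these I would check directly that any chord added to $H$ still leaves $D_2$ of the resulting graph connected (for $C_{2k+1}$ with $k\ge 2$ one uses that a chord splits the cycle into two subpaths, at least one of length $\ge 2$, so enough distance-$2$ pairs survive; the three exceptional graphs are finite and can be handled by inspection together with Lemma~\ref{23}(3) when the order is $5$). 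Since $H$ is induced but $G$ may add edges among $V(H)$ \emph{only} through such chords — wait, $G$ adds no edges within $V(H)$ at all since $H$ is induced — actually the subtlety is the opposite: distances in $G$ may be \emph{shorter} than in $H$, so a distance-$2$-in-$H$ pair could be distance-$1$-in-$G$ via a vertex outside $H$; but $H$ induced means $xy\notin E(G)$ whenever $xy\notin E(H)$, so that cannot happen, and every edge of $D_2(H)$ remains an edge of $D_2(G)$. Hence $V(H)$ lies in one component $C$ of $D_2(G)$.

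\textbf{Absorbing the remaining vertices.} Now take any $x\in V(G)\setminus V(H)$; I must connect $x$ to $C$ in $D_2(G)$. The hypothesis $V(H)\not\subseteq N(x)$ gives a vertex $u\in V(H)$ with $d(x,u)\ge2$. If $d(x,u)=2$, then $xu\in E(D_2(G))$ and we are done. If $d(x,u)\ge3$, take a shortest $x$–$u$ path $x=v_0,v_1,\dots,v_m=u$ with $m\ge3$; then $v_0 v_2\in E(D_2(G))$, so $x$ is joined in $D_2(G)$ to $v_2$, and it suffices to connect $v_2$ to $C$. One then argues by induction on $d(x,V(H))$ (or on $|V(G)|$): replace $x$ by $v_2$, which is strictly closer to $H$, and note $v_2$ still cannot dominate $V(H)$ — because if $V(H)\subseteq N(v_2)$ then every vertex of $H$ is within distance $1$ of $v_2$ hence within distance $2$ of $v_1$, and since $u\in V(H)$ this is consistent; a cleaner route is: we only need \emph{some} neighbor of $x$, or vertex at distance $2$ from $x$, to be reachable in $C$, and the shortest-path argument feeds us $v_2$ with $d(v_2,V(H))=d(x,V(H))-2$ or we directly hit a vertex at distance $2$ from a vertex of $H$. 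Iterating, we reach a vertex $w$ with $d(w,V(H))\le 2$; if $d(w,V(H))=2$ it is adjacent in $D_2(G)$ to a vertex of $H\subseteq C$; if $d(w,V(H))=1$, pick $h\in V(H)\cap N(w)$ and use $d(w,h')=2$ for a suitable $H$-neighbor $h'$ of $h$ (such $h'$ exists and is non-adjacent to $w$ precisely when $w$ does not dominate $N_H[h]$), placing $w$ in $C$.

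\textbf{Main obstacle.} The delicate point — and the reason the hypothesis "$V(H)\not\subseteq N(x)$ for \emph{any} $x$" is phrased globally rather than just for the $x$ being absorbed — is the case $d(w,V(H))=1$: a vertex adjacent to some of $H$ but to no vertex at the right distance. Here one needs that $w$, being adjacent to $h\in V(H)$, fails to be adjacent to \emph{all} of $N_H(h)$ inside $H$ (else, combined with the structure of $H$, one could show $w$ together with part of $H$ forms a configuration contradicting $V(H)\not\subseteq N(w)$, or one shrinks $H$). Concretely I would show: for $H\in\mathcal H$ and any vertex $h$, the set $N_H[h]$ is not all of $V(H)$ when $H$ is an odd cycle of length $\ge 7$, and is handled by direct inspection for $\ge 5$-cycles and the three sporadic graphs — so some $h'\in V(H)$ has $d_G(w,h')=2$, finishing the induction. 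So the crux is a finite case analysis on the members of $\mathcal H$ guaranteeing enough surviving distance-$2$ pairs both \emph{within} $H$ (under chords, for Step 1) and \emph{from an external near-vertex into} $H$ (for Step 2); everything else is a routine shortest-path induction on $\mathrm{dist}(x,V(H))$.
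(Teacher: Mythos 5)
Your skeleton is the same as the paper's: since $H$ is induced, every edge of $D_2(H)$ survives in $D_2(G)$, so $V(H)$ lies in one component of $D_2(G)$; then each outside vertex is walked toward $H$ two steps at a time along a shortest path, and the hypothesis $V(H)\nsubseteq N(\cdot)$ is only needed for the final step, when the current vertex is adjacent to $H$. The problem is how you justify that final, crucial case. For $d(w,V(H))=1$ you propose to finish via a case analysis over the members of $\mathcal{H}$ showing $N_H[h]\neq V(H)$. That fact is not the relevant one and does not by itself produce a vertex of $H$ at $G$-distance exactly $2$ from $w$: the witness need not be an $H$-neighbor of $h$ at all (take $H$ a $7$-cycle and $w$ adjacent to five consecutive vertices $h_1,\dots,h_5$; for $h=h_3$ the whole of $N_H[h_3]$ is dominated by $w$, and the distance-$2$ witness is $h_6$, far from $h_3$), and nothing in your argument rules out $w$ dominating $N_H[h]$. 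The missing idea — which is also what makes the paper's proof short — is to apply the hypothesis to $w$ itself and use only that $H$ is connected: since $N_G(w)\cap V(H)\neq\emptyset$ and $V(H)\nsubseteq N_G(w)$, connectivity of $H$ gives an edge $h_1h_2$ of $H$ with $h_1\in N_G(w)$ and $h_2\notin N_G(w)$, whence $d_G(w,h_2)=2$ and $w$ joins the component containing $V(H)$. No structural case analysis on $\mathcal{H}$ is needed anywhere; the only properties of $H$ used are that $D_2(H)$ is connected (Lemma \ref{23}) and that $H$ is connected.

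A secondary bookkeeping issue: your induction measure is shaky, since $v_2$ on a shortest path to a \emph{chosen non-neighbor} $u\in V(H)$ need not be strictly closer to $V(H)$ than $x$ (e.g.\ $x$ may already be at distance $1$ from $H$ while $d(x,u)=3$), so "iterating" as described is not obviously terminating without repair. The clean fix, as in the paper, is to take a shortest path $x=u_0u_1\cdots u_t$ from $x$ to the \emph{set} $V(H)$: then $u_0,u_2,u_4,\dots$ is a path in $D_2(G)$, and according to the parity of $t$ you either land in $V(H)$ or on $u_{t-1}$, which is exactly the adjacent-to-$H$ case settled above. Likewise your Step 1 detour about chords (and the attendant per-member checks) is unnecessary, as you yourself eventually note: $H$ induced already guarantees $E(D_2(H))\subseteq E(D_2(G))$.
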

\begin{proof}
Since $D_2(H)$ is connected and $H$ is an induced subgraph, then for any two non-adjacent vertices $x,y\in V(H)$, we have $d(x,y)\geqslant2$ and thus $D_2(H)$ is a subgraph of $D_2(G)$. Let $u\in V(G)\setminus V(H)$, $d(u, V(H))=t$ and $u=u_0u_1\cdots u_t=v$ is a path with $v\in V(H)$. Since $V(H)\not\subseteq N(u_{t-1})$, then there is $w\in V(H)$ such that $d(u_{t-1},w)=2$. Consequently, $u$ is connected to $v$ or $w$ in $D_2(G)$ as required.
\end{proof}

\begin{definition}
Let $A$ and $B$ are two distinct set of vertices. $[A,B]$ denotes the complete bipartite graph constructed by $A$ and $B$ as the independent sets.
\end{definition}

\begin{lemma}\label{999}
Let $G$ be a graph such that $D_2(G)$ is disconnected. If $H$ be a connected induced subgraph of $G$ and $D_2(H)$ is connected, then there is a subgraph $K$ of $G$ such that $H\leqslant K$, $D_2(K)$ is connected and   $[V(K),N_G(V(K))\setminus V(K)]$ is a subgraph of $G$.
\end{lemma}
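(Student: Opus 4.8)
The plan is to build $K$ from $H$ by a greedy enlargement. Among all \emph{connected induced} subgraphs $K$ of $G$ with $V(H)\subseteq V(K)$ and $D_2(K)$ connected, choose one with $|V(K)|$ maximum. This family is non-empty — it contains $H$ itself, since by hypothesis $H$ is a connected induced subgraph of $G$ with $D_2(H)$ connected — and $G$ is finite, so a maximum element $K$ exists; also $H\leqslant K$ automatically because both $H$ and $K$ are induced. Moreover $V(K)\neq V(G)$, for otherwise $D_2(K)=D_2(G)$ would be connected, contrary to hypothesis. It remains to prove that the join $[V(K),N_G(V(K))\setminus V(K)]$ is a subgraph of $G$.

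Suppose not. Then there exist $x\in N_G(V(K))\setminus V(K)$ and $a,b\in V(K)$ with $ax\in E(G)$ and $bx\notin E(G)$ (so $a\neq b$). Put $K'=G[V(K)\cup\{x\}]$, which is connected since $K$ is connected and $x$ is adjacent to $a$. I claim $D_2(K')$ is connected; as $|V(K')|>|V(K)|$ this contradicts the choice of $K$ and finishes the proof. The crucial point is that appending the single vertex $x$ does not spoil any distance-$2$ pair inside $V(K)$: because $K'[V(K)]=K$, no non-edge of $K$ becomes an edge of $K'$, while every path of length $2$ in $K$ persists in $K'$; hence $d_K(u,v)=2$ forces $d_{K'}(u,v)=2$ for all $u,v\in V(K)$. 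Therefore $D_2(K)$ is a spanning subgraph of $D_2(K')[V(K)]$, so all of $V(K)$ lies in a single connected component of $D_2(K')$.

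It thus suffices to join $x$ to $V(K)$ in $D_2(K')$. Since $K'$ is connected and contains the vertex $b$ with $d_{K'}(x,b)\geqslant 2$ (using $xb\notin E(G)$ and that $K'$ is induced), there is a vertex $c$ with $d_{K'}(x,c)=2$; then $c\neq x$, so $c\in V(K)$, and $xc\in E(D_2(K'))$. Hence $D_2(K')$ is connected, giving the desired contradiction.

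The argument is short, and the only steps requiring care are the monotonicity observation $D_2(K)\leqslant D_2(K')[V(K)]$ — that enlarging $K$ by one vertex can only help the distance-$2$ relation among the old vertices — together with the remark that the new vertex $x$ automatically acquires a neighbour at distance exactly $2$ in $K'$ (walk one step from $x$ toward $b$). Note that Lemmas~\ref{23} and~\ref{24} are not needed here, and the hypothesis that $D_2(G)$ is disconnected is used only to ensure $K\neq G$.
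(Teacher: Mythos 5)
Your proposal is correct and is essentially the paper's own argument: the paper enlarges $H$ one vertex at a time by adjoining a neighbour $x$ with $V(H)\not\subseteq N_G(x)$ (citing the argument of Lemma~\ref{24} for why $D_2$ stays connected) until the join condition holds, stopping before $G$ because $D_2(G)$ is disconnected. You phrase the same greedy enlargement extremally (take a maximum such $K$ and derive a contradiction from a one-vertex extension) and spell out the extension step explicitly, which is a presentational difference only.
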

\begin{proof}
If $[V(H),N_G(V(H))\setminus V(H)]$ is a subgraph of $G$, then $K=H$. Otherwise, there is $x\in N_G(V(H))\setminus V(H)$ such that $V(H)\not\subseteq N_G(x)$. Similar to proof of Lemma \ref{24}, $D_2(\langle V(H)\cup{x}\rangle)$ is connected. By setting $H_1=\langle V(H)\cup{x}\rangle$, we can continue this process by replacing $H_1$ to $H$. Since $G$ is finite there $H_t$ such that $H_t\ne G$ and for any $x\in N_G(V(H_t))\setminus V(H_t)$, $V(H_t)\subseteq N_G(x)$. By setting $K=H_t$, $[V(K),N_G(V(K))\setminus V(K)]$ is a subgraph of $G$, which completes the proof.
\end{proof}

Due to the above lemma, we introduce the following definition:

\begin{definition}
Let $G$ be a graph and $A\subseteq V(G)$.  The set $A$ is called  \textbf{Fine} if $[V(A),N_G(V(A))\setminus V(A)]$ is a subgraph of $G$, or equivalently for any $x\in A$ , $N(x)\setminus A=N(A)\setminus A\ne\emptyset$ .
\end{definition}

\begin{lemma}
For any $x\in V(G)$ with $diam(G)\geqslant3$, there is a unique  maximal Fine set $A$ such that $x \in A$.
\end{lemma}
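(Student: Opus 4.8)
The plan is to realise the unique maximal Fine set containing $x$ as the union of the family $\mathcal F_x=\{A\subseteq V(G):A\text{ is Fine and }x\in A\}$. First I would note that $\mathcal F_x$ is non-empty: since $diam(G)\geqslant3$ the graph $G$ is connected with $|V(G)|\geqslant4$, so $x$ is not isolated and hence $\{x\}$ is Fine, because $N(x)\setminus\{x\}=N(\{x\})\setminus\{x\}=N(x)\neq\emptyset$. As $V(G)$ is finite, $\mathcal F_x$ is a finite non-empty family. The whole lemma then reduces to one claim: \emph{if $A,B\in\mathcal F_x$ then $A\cup B\in\mathcal F_x$.} Granting this, the finite union $A:=\bigcup_{S\in\mathcal F_x}S$ again lies in $\mathcal F_x$, and it contains every Fine set containing $x$ by construction, so it is the unique maximal one.

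To prove the claim, fix $A,B\in\mathcal F_x$, put $C=A\cup B$, $N_A=N(A)\setminus A$, $N_B=N(B)\setminus B$. Applying the Fine condition to $A$ and to $B$ at the common vertex $x$ one gets $N_A\setminus B=(N(x)\setminus A)\setminus B=N(x)\setminus C=(N(x)\setminus B)\setminus A=N_B\setminus A$; denote this common set by $M$. For any $a\in A$ we have $N(a)\setminus C=(N(a)\setminus A)\setminus B=N_A\setminus B=M$, and symmetrically $N(b)\setminus C=M$ for every $b\in B$; hence $N(v)\setminus C=M$ for all $v\in C$, and therefore $N(C)\setminus C=M$. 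Thus $C$ satisfies the Fine condition \emph{provided $M\neq\emptyset$}, and the entire argument comes down to ruling out $M=\emptyset$. (This portion is just the classical fact that the union of two overlapping modules is again a module.)

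The crux — and the only place the hypothesis $diam(G)\geqslant3$ is used — is showing $M\neq\emptyset$. Suppose $M=\emptyset$, i.e. $N_A\subseteq B$ and $N_B\subseteq A$. Then $N(C)=N(A)\cup N(B)\subseteq A\cup B=C$, and since $G$ is connected this forces $C=V(G)$. I would then derive $diam(G)\leqslant2$, a contradiction: any two vertices of $A$ are at distance $\leqslant2$ through an arbitrary vertex of the non-empty set $N_A$, and likewise for $B$; and for $u\in A\setminus B$, $v\in B\setminus A$, pick $w\in N_B\subseteq A$ (so $w\notin B$); the Fine condition for $B$ gives $w\in N(v)$, hence $v\in N(w)\setminus A=N_A$, so $B\setminus A\subseteq N_A$ and symmetrically $A\setminus B\subseteq N_B$; in particular $u\in N_B$, so $u\in N(v)$, i.e. $u$ and $v$ are adjacent. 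As $C=V(G)$ every pair of vertices falls under one of these cases, so $diam(G)\leqslant2$. Hence $M\neq\emptyset$, $C$ is Fine, and (containing $x$) $C\in\mathcal F_x$, which proves the claim.

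I expect the main obstacle to be exactly this last step: the module-type computation showing $N(v)\setminus C$ is one fixed set $M$ for all $v\in C$ is routine, but deducing $M\neq\emptyset$ genuinely requires the diameter hypothesis and proceeds in two stages (first $C=V(G)$, then bounding every pairwise distance by $2$), with the delicate case being $u\in A\setminus B$, $v\in B\setminus A$, handled through the inclusions $B\setminus A\subseteq N_A$ and $A\setminus B\subseteq N_B$.
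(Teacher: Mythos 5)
Your proposal is correct and follows essentially the same route as the paper: the key step in both is that the union of two Fine sets containing $x$ is again Fine, with the hypothesis $diam(G)\geqslant3$ used precisely to rule out the degenerate case $N(A\cup B)\setminus(A\cup B)=\emptyset$ (where the paper, like you, derives $diam(G)\leqslant2$ from $A\cup B=V(G)$). The only cosmetic difference is that the paper applies this union argument to two maximal Fine sets and concludes by maximality, whereas you close the whole family $\mathcal{F}_x$ under unions and take its total union.
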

\begin{proof}
For $x\in V(G)$, $\{x\}$ is Fine. Since $G$ is finite, there are some maximal Fine sets containing $x$. 
Let $A$ and $B$ two maximal Fine sets containing $x$. we claim that $A\cup B$ is Fine. By hypothesis $N(x)\setminus A=N(A)\setminus A$ and $N(x)\setminus B=N(B)\setminus B$. \\
First, we prove that $N(A\cup B)\setminus (A\cup B)\ne\emptyset$. \\
If $N(A\cup B)\subseteq  (A\cup B)$, since $G$ is connected, $A\cup B=V(G)$ and   $N(B)\setminus B\subseteq A$ and $N(A)\setminus A\subseteq B$. Let $y\in N(B)\setminus B$, then $y\in A$, and since $B$ is Fine, then  $B\subseteq N(y)\subseteq N(A)$. Thus, $B\setminus A\subseteq N(A)\setminus A$ and therefore $B\setminus A=N(A)\setminus A$. Therefore $diam(G)=2$ because $A$ is Fine, a contradiction.

Since $x\in A\cup B$ then it is trivial that $N(x)\setminus (A\cup B)\subseteq N(A\cup B)\setminus (A\cup B)$. Now let $y\in N(A\cup B)\setminus (A\cup B)$ then $y\not\in A\cup B$ and $y\in N(A\cup B)$. So $y\in N(A)$ or $y\in N(B)$. 
Without loss of generality, let $y\in N(A)$ then $y\in N(A)\setminus A=N(x)\setminus A$, hence $y\in N(x)\setminus (A\cup B)$.
Therefore, $ N(x)\setminus (A\cup B)= N(A\cup B)\setminus (A\cup B)$. On the other hand, for any $a\in A$, $ N(a)\setminus A= N(A)\setminus A=N(x)\setminus N(A)$ 
and so $ N(a)\setminus (A\cup B)= N(x)\setminus (A\cup B)= N(A\cup B)\setminus (A\cup B)$. Similarly, for any $b\in B$, $ N(b)\setminus (A\cup B)= N(A\cup B)\setminus (A\cup B)$. Consequently, $A\cup B$ is a Fine set and by maximality $A$ and $B$, $A=A\cup B=B$.
\end{proof}

\begin{remark}
For a connected graph with diameter 2, it may be the maximal Fine set containing a vertex do not  be unique. For example, in Cycle graph $C_4$, the maximal Fine set corresponding to each vertex is unique.

\vspace{-.7cm}
\begin{center}
\begin{tikzpicture}
\draw (0,0)--(0,1)--(1,1)--(1,0)--(0,0);
\node at (-.3,-.3){$d$};\node at (-.3,1.3){$a$};\node at (1.3,1.3){$b$};\node at (1.3,-.3){$c$};
\node at (7,1){$\{a,c\}\text{ is the unique maximal Fine set containing }a \text{ or } c.$};
\node at (7,0){$\{b,d\}\text{ is the unique maximal Fine set containing }b \text{ or } d.$};
\filldraw (0,0) circle(2pt);\filldraw (0,1) circle(2pt);\filldraw (1,0) circle(2pt);\filldraw (1,1) circle(2pt);
\end{tikzpicture}
\end{center}
But in the following graph, the maximal containing a vertex is not unique. 
\begin{center}
\begin{tikzpicture}
\draw (1,1)--(0,0)--(0,1)--(1,1)--(1,0)--(0,0);
\node at (-.3,-.3){$d$};\node at (-.3,1.3){$a$};\node at (1.3,1.3){$b$};\node at (1.3,-.3){$c$};
\node at (7.1,1.2){$\{a,b,c\}, \{a,c,d\}\text{ are two maximal Fine sets containing }a \text{ or } c.$};
\node at (6.5,.5){$\{b,d\}, \{a,b,c\}\text{ are two maximal Fine sets containing }b.$};
\node at (6.5,-.2){$\{b,d\}, \{a,c,d\}\text{ are two maximal Fine sets containing }d.$};
\filldraw (0,0) circle(2pt);\filldraw (0,1) circle(2pt);\filldraw (1,0) circle(2pt);\filldraw (1,1) circle(2pt);
\end{tikzpicture}
\end{center}
\end{remark}

\begin{remark}
It seems that if a Fine set is not maximal, we can add a vertex to find a new Fine set contaning the previous Fine set. It is not true.\\ For example, in the graph shown in the following graph,  $\{a,b\}$ is Fine but it is not a maxmal Fine, because $\{a,b,c,x,y\}$ is maximal Fine containing both $a$ and $b$. But none of the 3-element sets contaning $\{a,b\}$ is not Fine.
\end{remark}

\begin{figure}[h] \label{7}
\begin{center}
\begin{tikzpicture}
\draw (-1,0)--(1,.6)--(1,1.4)--(4,0)--(1,-1.4)--(1,-.6)--(-1,0)--(5,0)--(6,0);
\draw (1,.6)--(4,0)--(1,-.6);\draw (1,1.4)--(-1,0);
\node at (-1.3,0){$c$};\node at (1,1.7){$a$};\node at (1,.35){$b$};
\node at (4,0.3){$d$};\node at (1,-1.7){$y$};\node at (1,-.35){$x$};
\node at (5,.3){$u$};\node at (6.3,0){$v$};
\filldraw (-1,0) circle(2pt);\filldraw (1,.6) circle(2pt);\filldraw (1,-.6) circle(2pt);
\filldraw (1,-1.4) circle(2pt);\filldraw (1,1.4) circle(2pt);
\filldraw (4,0) circle(2pt);\filldraw (5,0) circle(2pt);\filldraw (6,0) circle(2pt);
\end{tikzpicture}
\end{center}
\end{figure}

By the above lemma we have the following:

\begin{corollary}
The maximal Fine sets partition the vertex set of a graph with diameter greater than $2$.
\end{corollary}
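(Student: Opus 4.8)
The plan is to read this off directly from the preceding lemma, which asserts that for a graph $G$ with $diam(G)\geqslant 3$ every vertex $x$ lies in a \emph{unique} maximal Fine set; denote that set by $A_x$. First I would note the covering property: since $x\in A_x$ for each $x\in V(G)$, the family $\mathcal{A}=\{A_x : x\in V(G)\}$ has union equal to $V(G)$, and every member is nonempty. So the only thing left to verify is that any two members of $\mathcal{A}$ are either identical or disjoint.

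For that, suppose $A$ and $B$ are maximal Fine sets with $A\cap B\neq\emptyset$, and pick $z\in A\cap B$. Then $A$ and $B$ are both maximal Fine sets containing $z$, so the uniqueness clause of the lemma forces $A=B=A_z$. Hence distinct maximal Fine sets are pairwise disjoint. Combining this with the covering property, the distinct sets among $\{A_x : x\in V(G)\}$ are nonempty, pairwise disjoint, and cover $V(G)$, which is precisely the statement that the maximal Fine sets form a partition of $V(G)$.

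There is essentially no obstacle here: all the real work sits in the earlier uniqueness lemma, and this corollary is a purely formal consequence of it. The one point worth flagging in the write-up is the role of the hypothesis $diam(G)>2$ — it is exactly what licenses the appeal to the uniqueness lemma, and the preceding remark (the triangle-plus-pendant example with $C_4$-like behaviour) shows that for diameter $2$ a vertex may lie in several maximal Fine sets, so the partition conclusion genuinely needs $diam(G)\geqslant 3$.
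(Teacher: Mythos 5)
Your proof is correct and matches the paper's intent exactly: the paper offers no separate argument, presenting the corollary as an immediate consequence of the uniqueness lemma, and your covering-plus-disjointness formalization is precisely that deduction. (Minor aside: the non-uniqueness example in the paper's remark is the $4$-cycle with a chord, not a triangle with a pendant, but this does not affect your argument.)
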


\begin{definition}Let $G$ be a graph with $diam(G)\geqslant3$.  $\hat{G}$ be the graph with $$V(\hat{G})=\{\hat{x}:\hat{x}\text{ is the maximal Fine set of } G\text{ containing } x\}$$ and two distinct vertices $\hat{x},\hat{y} \in V(\hat{G})$ are adjacent if and only if $x$ and $y$ are adjacent in $G$.

\begin{lemma}\label{777}
Let $G$ be a graph with $diam(G)\geqslant3$ and 
 $\hat{x},\hat{y}\in V(\hat{G})$ are distinct vertices, where $x,y\in V(G)$. Then $d(x,y)=d(\hat{x},\hat{y})$.
\end{lemma}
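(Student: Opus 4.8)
The plan is to establish the two inequalities $d(\hat x,\hat y)\le d(x,y)$ and $d(x,y)\le d(\hat x,\hat y)$ separately; since $\hat x\ne\hat y$ both sides are positive, and combining them yields the claim.

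For $d(\hat x,\hat y)\le d(x,y)$ I would simply project a shortest path. Let $x=v_0v_1\cdots v_k=y$ be a shortest path in $G$. For each $i$, either $v_i$ and $v_{i+1}$ lie in the same maximal Fine set, in which case $\hat v_i=\hat v_{i+1}$, or they lie in different ones, and then the edge $v_iv_{i+1}$ gives $\hat v_i\hat v_{i+1}\in E(\hat G)$ by the definition of $\hat G$. Deleting consecutive repetitions from $\hat v_0,\dots,\hat v_k$ therefore produces a walk in $\hat G$ from $\hat x$ to $\hat y$ of length at most $k$, whence $d(\hat x,\hat y)\le k=d(x,y)$.

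For the reverse inequality the key point is the structural fact that \emph{every edge of $\hat G$ lifts to a complete bipartite subgraph of $G$}: if $\hat A\hat B\in E(\hat G)$ with $A\ne B$, then since the maximal Fine sets partition $V(G)$ we have $A\cap B=\emptyset$, and starting from an edge $ab$ with $a\in A$, $b\in B$, the Fine condition $N(a')\setminus A=N(A)\setminus A$ (valid for every $a'\in A$) first forces $a'b\in E(G)$ for all $a'\in A$, and then $N(b')\setminus B=N(B)\setminus B$ forces $a'b'\in E(G)$ for all $b'\in B$; hence $[A,B]$ is a subgraph of $G$. Granting this, take a shortest path $\hat x=\hat w_0\hat w_1\cdots\hat w_m=\hat y$ in $\hat G$, let $A_i$ denote the Fine set $\hat w_i$, and choose representatives $a_i\in A_i$ with $a_0=x$ and $a_m=y$. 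Since $[A_i,A_{i+1}]\subseteq G$ we get $a_ia_{i+1}\in E(G)$ for each $i$, so $x=a_0a_1\cdots a_m=y$ is a walk in $G$ and $d(x,y)\le m=d(\hat x,\hat y)$. (In fact it is a path, because along a shortest path in $\hat G$ the sets $A_i$ are pairwise distinct, but this is not needed here.)

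The only delicate step is the highlighted structural fact, and it is precisely there that one must invoke the \emph{uniformity} of the external neighbourhood in the definition of a Fine set, not merely its non-emptiness; the same observation also shows, as a by-product, that adjacency in $\hat G$ is well defined independently of the chosen representatives $x,y$.
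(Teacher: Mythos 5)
Your proposal is correct and follows essentially the same route as the paper: project a shortest path of $G$ to a walk in $\hat G$ for one inequality, and lift a shortest path of $\hat G$ back to $G$ for the other. The only difference is that you spell out the lifting step (an edge of $\hat G$ between distinct maximal Fine sets forces a complete bipartite subgraph $[A,B]$ of $G$, by the uniformity $N(a)\setminus A=N(A)\setminus A$), which the paper compresses into the phrase ``by the properties of Fine sets''; this added detail is accurate and also justifies that adjacency in $\hat G$ does not depend on the chosen representatives.
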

\begin{proof}
Let $\hat{x}=\hat{x_0}\hat{x_1}\hat{x_2}\cdots \hat{x_t}=\hat{y}$ be a $\hat{x}\hat{y}$-path. By the properties of Fine sets $x=x_0x_1x_2\cdots x_t=y$ is a $xy$-path.  Then $d(x,y)\leqslant d(\hat{x},\hat{y})$. On the other hand, if $x=x_0x_1x_2\cdots x_t=y$ be a $xy$-path, then $\hat{x}=\hat{x_0}\hat{x_1}\hat{x_2}\cdots \hat{x_t}=\hat{y}$ is a $\hat{x}\hat{y}$-walk and then contains a $\hat{x}\hat{y}$-path such that its length is less than $t$. Consequently $d(\hat{x},\hat{y})\leqslant d(x,y)$ and then $d(\hat{x},\hat{y})=d(x,y)$.
\end{proof}
\end{definition}

One can see that if $x,y\in V(G)$ and $\hat{x}=\hat{y}$, then $d(x,y)\leqslant2$ and the following is hold:

\begin{corollary}
Let $G$ be a graph with $diam(G)\geqslant3$. Then, $diam(G)=diam(\hat{G})$.
\end{corollary}

\begin{remark}\label{rem213}
Let $G$ be a graph with $diam(G)\geqslant3$. Then, $diam(\hat{G})\geqslant3$. 
Thus we can consider $\hat{\hat G}$. But one can see that if $S\subseteq \hat G$ be a Fine set, then $S_1=\{a:\hat a\in S\}$ is a Fine set of $G$, which mean that $S$ has a unique element an then $\hat{\hat G}=\hat G$.
\end{remark}

\begin{theorem}\label{888}
Let $G$ be a graph with $diam(G)\geqslant3$. Then, $D_2(G)$ is connected if and only if $D_2(\hat{G})$ is connected. 
\end{theorem}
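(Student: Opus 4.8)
The plan is to establish the equivalence by transporting paths in $D_2$ back and forth between $G$ and $\hat G$, using Lemma~\ref{777} as the bridge between distances. The cleanest way is to relate $D_2(\hat G)$ to a quotient-type structure on $D_2(G)$. First I would observe the following local fact: if $x,y\in V(G)$ with $\hat x\ne\hat y$, then by Lemma~\ref{777} we have $d(x,y)=2$ in $G$ if and only if $d(\hat x,\hat y)=2$ in $\hat G$; hence $\hat x\hat y\in E(D_2(\hat G))$ exactly when $xy\in E(D_2(G))$ with $\hat x\ne\hat y$. Next I would handle what happens \emph{inside} a maximal Fine set $A$: if $x,y\in A$ are distinct, then (as noted in the excerpt) $d(x,y)\le 2$, and since $A$ is Fine there is a common neighbour $z\in N(A)\setminus A$ of both $x$ and $y$; if $x,y$ are non-adjacent this gives $d(x,y)=2$, and if $x,y$ are adjacent then any such $z$ satisfies $d(x,z)=d(y,z)=1$ and we can instead look for a vertex at distance $2$. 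The key structural claim I would prove is: \emph{every maximal Fine set $A$ with $|A|\ge 2$ induces a connected subgraph of $D_2(G)$, and moreover every vertex of $A$ is joined in $D_2(G)$ to at least one vertex outside $A$.} The "outside" part is immediate from Fineness together with $\mathrm{diam}(G)\ge 3$: pick $z\in N(A)\setminus A$; since the diameter is at least $3$ there is a vertex at distance $2$ from $x$ for each $x\in A$, and one checks it can be taken in $N(z)\setminus A$. For connectivity of $A$ inside $D_2(G)$, if $|A|=2$, $A=\{x,y\}$, then either $x\not\sim y$ (done, they are adjacent in $D_2(G)$) or $x\sim y$, in which case both are non-adjacent to some common vertex $w\notin A$ via the diameter argument, so they lie in the same $D_2(G)$-component through $w$.

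With these two facts in hand, the forward direction ($D_2(G)$ connected $\Rightarrow$ $D_2(\hat G)$ connected) is straightforward: the map $x\mapsto \hat x$ sends $D_2(G)$-edges to $D_2(\hat G)$-edges or to "loops" (when $\hat x=\hat y$), so it sends $D_2(G)$-walks to $D_2(\hat G)$-walks; since every vertex $\hat A$ of $\hat G$ has a preimage and $D_2(G)$ is connected, $D_2(\hat G)$ is connected.

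For the converse, suppose $D_2(\hat G)$ is connected and let $x,y\in V(G)$. Pick a path $\hat A_0\hat A_1\cdots \hat A_k$ in $D_2(\hat G)$ from $\hat x=\hat A_0$ to $\hat y=\hat A_k$. For each consecutive pair $\hat A_i\hat A_{i+1}$, since $d(\hat A_i,\hat A_{i+1})=2$, there are $a_i\in A_i$, $b_i\in A_{i+1}$ with $d(a_i,b_i)=2$ in $G$, i.e.\ $a_ib_i\in E(D_2(G))$. Now I would stitch these edges together: within each Fine set $A_i$ the two relevant "entry" and "exit" vertices $b_{i-1}$ and $a_i$ lie in the same component of $D_2(G)$ by the structural claim above (a maximal Fine set of size $\ge 2$ is connected in $D_2(G)$, and if $|A_i|=1$ then $b_{i-1}=a_i$ automatically). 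Concatenating, $x$ and $y$ are connected in $D_2(G)$ — after first moving $x$ to $a_0$ and $y$ to $b_{k-1}$ inside their own Fine sets, again using the claim. Hence $D_2(G)$ is connected.

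The main obstacle I expect is the structural claim, specifically the case where a maximal Fine set $A$ induces a clique in $G$ (so no two of its vertices are at distance $2$ from each other) — there one must genuinely use $\mathrm{diam}(G)\ge 3$ and Fineness to route each vertex of $A$ out through a common external vertex and back, and check that all of $A$ lands in a single $D_2(G)$-component; the degenerate sub-case $A=V(G)$ is excluded precisely because $\mathrm{diam}(G)\ge 3$ forces $N(A)\setminus A\ne\emptyset$. The rest is bookkeeping with Lemma~\ref{777}.
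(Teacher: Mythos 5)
Your forward direction ($D_2(G)$ connected $\Rightarrow$ $D_2(\hat G)$ connected) is exactly the paper's argument: the map $x\mapsto\hat x$ sends $D_2(G)$-edges to $D_2(\hat G)$-edges or to repeated vertices, so paths project to walks. In the converse direction you take a genuinely different route. The paper uses the fact that Lemma~\ref{777} holds for \emph{arbitrary} representatives: if $\hat{x_i}\hat{x_{i+1}}$ is an edge of $D_2(\hat G)$, then \emph{every} choice of $x_i\in\hat{x_i}$, $x_{i+1}\in\hat{x_{i+1}}$ satisfies $d_G(x_i,x_{i+1})=2$, so a $D_2(\hat G)$-path from $\hat x$ to $\hat y$ lifts directly to a $D_2(G)$-path from $x$ to $y$ with no stitching inside Fine sets; the remaining case $\hat x=\hat y$ is handled by choosing $z$ with $\hat z\ne\hat x$ (possible since $diam(G)\geqslant3$) and joining both $x$ and $y$ to $z$ by the previous case, i.e.\ using connectivity of $D_2(\hat G)$ rather than any structural property of Fine sets. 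You instead propose a standalone structural claim --- all vertices of a maximal Fine set lie in one component of $D_2(G)$ --- which is where all the real work in your plan sits, while your stitching step is actually unnecessary once the ``arbitrary representatives'' point is noted.

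Your structural claim is true and your plan goes through, but as stated it has two soft spots. First, a Fine set inducing a clique induces an \emph{edgeless} subgraph of $D_2(G)$, so ``induces a connected subgraph of $D_2(G)$'' is literally false; what you need (and evidently intend) is that $A$ lies in a single component. Second, the justification ``a vertex at distance $2$ from $x$ can be taken in $N(z)\setminus A$'' for one fixed $z\in N(A)\setminus A$ need not hold. The clean argument: set $S=N(A)\setminus A$ and $T=N(S)\setminus(A\cup S)$. Since every $x\in A$ has $N(x)\subseteq A\cup S$ and is adjacent to all of $S$, every $x\in A$ is at distance exactly $2$ from every $w\in T$; and if $T=\emptyset$ then no edge leaves $A\cup S$, so by connectivity $V(G)=A\cup S$ and $diam(G)\leqslant2$, a contradiction. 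Hence $T\ne\emptyset$ and all of $A$ is $D_2$-adjacent to any $w\in T$, which gives your claim (maximality of $A$ is not even needed). With that repair your proof is complete; it even handles the case $\hat x=\hat y$ without invoking connectivity of $D_2(\hat G)$, unlike the paper's Case 2.
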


\begin{proof}
First, let $D_2(\hat{G})$ is connected and $x,y\in V(G)$. We consider the following two cases:\\
\textbf{Case 1:} 
$\hat{x}\ne\hat{y}$. There is a $\hat{x}\hat{y}$-path, $\hat{x}=\hat{x_1}\hat{x_2}\cdots\hat{x_t}=\hat{y}$ in $D_2(\hat{G})$. Since $\hat{x_i}\ne\hat{x_{i+1}}$, by Lemma \ref{777}, $d_G(x_i,x_{i+1})=d_{\hat{G}}(\hat{x_i},\hat{x_{i+1}})=2$, and then $x=x_1x_2\cdots x_t=y$ is a $xy$-path in $D_2(G)$.
\\\textbf{Case 2:} 
$\hat{x}=\hat{y}$. Since $diam(G)\geqslant3$, there is $z\in V(G)$ such that $\hat{x}\ne\hat{z}$. Then, $\hat{y}\ne\hat{z}$ and by Case 1, there are some paths between $x$ and $y$ to $z$ in $D_2(G)$.\\
 Consequently, $D_2(G)$ is connected.

Now, assume that $D_2(G)$ is connected and $\hat{x}\ne\hat{y}\in V(\hat{G})$ and  $x=x_1x_2\cdots x_t=y$ is a $xy$-path in $D_2(G)$. Since $\hat{x_i}=\hat{x_{i+1}}$ or $d_{\hat{G}}(\hat{x_i},\hat{x_{i+1}})=2$, then $\hat{x}=\hat{x_1}\hat{x_2}\cdots\hat{x_t}=\hat{y}$ is a  $\hat{x}\hat{y}$-walk  and contains a  $\hat{x}\hat{y}$-path in $D_2(\hat{G})$. Thus $D_2(\hat{G})$ is connected.
\end{proof}

By similarly argument, one can see that:
 \begin{corollary}
Let $G$ be a graph with $diam(G)\geqslant3$. Then, $diam(D_2(G))=diam(D_2(\hat{G}))$.
\end{corollary}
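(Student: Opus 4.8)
The plan is to mirror the proof of Theorem \ref{888}, but tracking path \emph{lengths} rather than mere connectivity. The first step is to establish the distance identity
$d_{D_2(G)}(x,y)=d_{D_2(\hat G)}(\hat x,\hat y)$ for all $x,y\in V(G)$ with $\hat x\ne\hat y$. For ``$\leqslant$'', take a shortest $\hat x\hat y$-path $\hat x=\hat{z_0}\hat{z_1}\cdots\hat{z_k}=\hat y$ in $D_2(\hat G)$; its vertices are distinct, so Lemma \ref{777} gives $d_G(z_i,z_{i+1})=d_{\hat G}(\hat{z_i},\hat{z_{i+1}})=2$, and choosing $z_0=x$, $z_k=y$ yields an $xy$-walk of length $k$ in $D_2(G)$. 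For ``$\geqslant$'', take a shortest $xy$-path $x=z_0z_1\cdots z_m=y$ in $D_2(G)$; for each $i$ either $\hat{z_i}=\hat{z_{i+1}}$, or $d_{\hat G}(\hat{z_i},\hat{z_{i+1}})=d_G(z_i,z_{i+1})=2$ by Lemma \ref{777}, so $\hat{z_0}\hat{z_1}\cdots\hat{z_m}$ is a $\hat x\hat y$-walk in $D_2(\hat G)$ of length at most $m$ and contains a $\hat x\hat y$-path of length at most $m$. The same reasoning shows that $d_{D_2(G)}(x,y)$ is infinite exactly when $d_{D_2(\hat G)}(\hat x,\hat y)$ is, which together with Theorem \ref{888} disposes of the disconnected case (both diameters equal $\infty$); so from now on assume both $2$-distance graphs are connected.

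From the identity, $diam(D_2(\hat G))=\max_{\hat x\ne\hat y}d_{D_2(\hat G)}(\hat x,\hat y)=\max_{\hat x\ne\hat y}d_{D_2(G)}(x,y)\leqslant diam(D_2(G))$. For the reverse inequality, pick $x,y$ realizing $diam(D_2(G))$. If $\hat x\ne\hat y$, we are done by the identity. If $\hat x=\hat y$, then $x\ne y$ lie in a common maximal Fine set $A$, and I claim $d_{D_2(G)}(x,y)\leqslant 2$: if $xy\notin E(G)$, then picking any $w\in N(A)\setminus A\ne\emptyset$ gives $x\sim w\sim y$, hence $d_G(x,y)=2$ and $x,y$ are already adjacent in $D_2(G)$; if $xy\in E(G)$, I would either find $a\in A\setminus\{x,y\}$ nonadjacent in $G$ to both $x$ and $y$ (then $d_G(x,a)=d_G(y,a)=2$ as above, so $a$ is a common $D_2(G)$-neighbour), or, failing that, invoke maximality of $A$ together with $diam(G)\geqslant 3$ to produce some $w\in N(A)\setminus A$ having a neighbour $z\notin N[A]$; such a $z$ satisfies $d_G(x,z)=d_G(y,z)=2$ and again is a common $D_2(G)$-neighbour of $x$ and $y$. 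Finally, since $D_2(\hat G)$ is connected and $diam(\hat G)=diam(G)\geqslant 3$ forces $\hat G$ (and hence $D_2(\hat G)$) to have more than one vertex and prevents $D_2(\hat G)$ from being complete, we get $diam(D_2(\hat G))\geqslant 2$. Therefore $d_{D_2(G)}(x,y)\leqslant 2\leqslant diam(D_2(\hat G))$, which closes the argument.

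I expect the main obstacle to be the within-class bound $d_{D_2(G)}(x,y)\leqslant 2$ when $\hat x=\hat y$; the path-lifting and path-projecting steps are routine. The delicate case is $xy\in E(G)$, where $x$ and $y$ are nonadjacent in $D_2(G)$ and one must exhibit a common $D_2(G)$-neighbour. The key point is that a maximal Fine set $A$ admits no Fine extension, so either some outside neighbour $w\in N(A)\setminus A$ has a neighbour outside $N[A]$ (otherwise $A\cup\{w\}$ would be Fine), or $A\cup\{w\}=V(G)$, which would force $diam(G)\leqslant 2$, contradicting our hypothesis; in either situation the required vertex $z$ at $G$-distance $2$ from both $x$ and $y$ appears. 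Once this is in place, all other steps follow the template of Theorem \ref{888} verbatim.
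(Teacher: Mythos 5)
The paper itself gives no written proof here (it only says ``by a similar argument'' to Theorem \ref{888}), and your proposal is exactly that intended argument, carried out with lengths tracked: the lifting/projecting of $D_2$-paths via Lemma \ref{777} gives $d_{D_2(G)}(x,y)=d_{D_2(\hat G)}(\hat x,\hat y)$ whenever $\hat x\ne\hat y$, and you correctly notice that the only genuinely new point is the within-class case $\hat x=\hat y$, which the paper leaves implicit. Your treatment of that case is structurally right, and the inequality $2\leqslant diam(D_2(\hat G))$ is justified correctly from $diam(\hat G)=diam(G)\geqslant3$.

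One justification in the delicate sub-case ($x,y\in A$, $xy\in E(G)$) is wrong as stated, though the fact you need is true and the repair is shorter than your argument. You claim that if $w\in N(A)\setminus A$ has no neighbour outside $N[A]$ then $A\cup\{w\}$ would be Fine, contradicting maximality, ``or $A\cup\{w\}=V(G)$''. That dichotomy fails: $A\cup\{w\}$ can fail to be Fine simply because $w$ is not adjacent to some other vertex of $N(A)\setminus A$ (or because $N(A\cup\{w\})\setminus(A\cup\{w\})=\emptyset$), with neither of your alternatives holding --- this is precisely the phenomenon the paper's remark about non-extendable Fine sets (the example with $\{a,b\}$ versus $\{a,b,c,x,y\}$) warns about, and maximality of $A$ is in fact irrelevant here. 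The correct argument: set $B=N(A)\setminus A\ne\emptyset$; if no vertex of $B$ had a neighbour outside $A\cup B$, then $A\cup B$ would be closed under neighbours, so by connectivity $V(G)=A\cup B$, and since every vertex of $A$ is adjacent to every vertex of $B$ (Fineness), $diam(G)\leqslant2$, a contradiction. Hence some $w\in B$ has a neighbour $z\notin N[A]$, and this $z$ is a common $D_2(G)$-neighbour of $x$ and $y$ (your first alternative, finding $a\in A$ nonadjacent to both, is then unnecessary). With that one step replaced, your proof is complete and coincides with the route the paper intends.
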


Now, we are ready to state and prove the main result of the paper.

\begin{theorem}
If $G$ be a connected graph and $diam(G)\geqslant3$, then $D_2(G)$ is disconnected if and only if $\hat{G}$ is a bipartite graph.
\end{theorem}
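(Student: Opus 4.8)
The plan is to dispatch the "if" direction immediately from facts already on the table, and to attack the "only if" direction by its contrapositive together with Theorem \ref{888} and Lemma \ref{999}. For the "if" direction: if $\hat G$ is bipartite, it has no odd cycle, so by Khormali's theorem $D_2(\hat G)$ is disconnected; since $\operatorname{diam}(\hat G)=\operatorname{diam}(G)\ge 3$, Theorem \ref{888} then gives that $D_2(G)$ is disconnected. So from now on assume $\hat G$ is not bipartite and aim to prove $D_2(G)$ is connected; by Theorem \ref{888} it suffices to prove $D_2(H)$ is connected, where $H:=\hat G$. Recall that $H$ is connected, $\operatorname{diam}(H)\ge 3$, and (Remark \ref{rem213}) $\hat H=H$, i.e. every maximal Fine set of $H$ is a singleton.

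Suppose, for contradiction, that $D_2(H)$ is disconnected. The key reduction is that then $H$ can contain \emph{no} connected induced subgraph $H_0$ with $|V(H_0)|\ge 2$ and $D_2(H_0)$ connected. Indeed, given such an $H_0$, Lemma \ref{999} produces $K$ with $H_0\le K\le H$, $D_2(K)$ connected, and $[V(K),N_H(V(K))\setminus V(K)]$ a subgraph of $H$; if $V(K)\ne V(H)$ then connectedness of $H$ gives $N_H(V(K))\setminus V(K)\ne\emptyset$, so $V(K)$ is a Fine set with $|V(K)|\ge |V(H_0)|\ge 2$, contradicting $\hat H=H$; hence $V(K)=V(H)$ and $D_2(H)=D_2(K)$ is connected, a contradiction. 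Now $H$ is not bipartite, so it has a shortest odd cycle $C$, which is chordless (a chord would cut out a strictly shorter odd cycle). If $|V(C)|\ge 5$ then $C\in\mathcal H$, so $D_2(C)$ is connected by Lemma \ref{23}, and $C$ is a connected induced subgraph on at least two vertices — contradicting the previous sentence. Therefore the shortest odd cycle of $H$ is a triangle $T=\{a,b,c\}$.

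This triangle case is the heart of the matter. Using $\hat H=H$, none of the pairs $\{a,b\},\{b,c\},\{a,c\}$ is Fine; since each such pair consists of two adjacent vertices with a common neighbour, non-Fineness means the two vertices of the pair have different neighbourhoods outside the pair, so each pair has a "private" neighbour. I would now run a case analysis on how these private neighbours attach to the triangle, together with the hypothesis $\operatorname{diam}(H)\ge 3$. When one can pick a private neighbour of $a$ and a private neighbour of $b$ that are nonadjacent to the third triangle vertex, the five vertices induce either the special graph "triangle plus two pendants" or, if those two private neighbours happen to be adjacent, the house graph — both in $\mathcal H$. The leftover configurations force diamonds, and there one pushes further: since $\operatorname{diam}(H)\ge 3$ a high-degree triangle vertex has a vertex at distance exactly $2$, and pulling that vertex in while repeatedly invoking non-Fineness (which forbids both false and "adjacent" twins) eventually exposes an induced copy of one of the three special graphs of $\mathcal H$ containing a triangle. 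Any such induced subgraph has connected $D_2$ by Lemma \ref{23}, which contradicts the reduction of the second paragraph; hence $D_2(H)$ is connected, completing the proof.

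The main obstacle is exactly the final structural claim: a connected graph $H$ with $\operatorname{diam}(H)\ge 3$, $\hat H=H$, and shortest odd cycle a triangle must contain an induced copy of one of the three triangle-containing members of $\mathcal H$. The delicacy is that the private neighbours forced by non-Fineness may be adjacent to other triangle vertices, producing diamonds rather than the clean triangle-plus-pendants picture, so the case analysis must be arranged so that the "no twins" content of $\hat H=H$ is applied again and again to break those diamonds. An alternative to this explicit search would be induction on $|V(H)|$: deleting a suitable vertex $v$ one has $D_2(H-v)\subseteq D_2(H)$ on $V(H)\setminus\{v\}$, and $D_2(H)$ has no isolated vertex (no vertex of $H$ can be within distance $1$ of all others, since $\operatorname{diam}(H)\ge 3$), so connectedness of $D_2(H-v)$ would give connectedness of $D_2(H)$; but arranging that $H-v$ again satisfies $\widehat{H-v}=H-v$ with diameter at least $3$ is itself awkward, so I expect the direct case analysis to be the cleaner route.
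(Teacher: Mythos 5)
Your first two reductions match the paper exactly: the bipartite direction via Theorem \ref{888}, and, for the converse, the observation that if $D_2(\hat G)$ were disconnected then (by Lemma \ref{999} together with $\hat{\hat G}=\hat G$ from Remark \ref{rem213}) $\hat G$ could contain no connected induced subgraph with connected $2$-distance graph, so no induced member of $\mathcal H$, and hence no induced odd cycle of length at least $5$. Up to that point the proposal is sound and is essentially the paper's argument.

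The genuine gap is the triangle case, which you yourself flag as ``the main obstacle'': you never prove the structural claim that a connected graph $H$ with $\operatorname{diam}(H)\geqslant 3$, $\hat H=H$, shortest odd cycle a triangle (and $D_2(H)$ disconnected) must contain an induced copy of one of the three triangle-containing members of $\mathcal H$. Phrases such as ``I would now run a case analysis'' and ``pulling that vertex in while repeatedly invoking non-Fineness eventually exposes an induced copy'' are a plan, not an argument, and the intended claim is not obviously true as stated: the paper does \emph{not} establish it either. Instead, the paper's contradiction in the triangle case is obtained by a different mechanism: it first proves that for any triangle through $\hat a$ the neighbourhood $\langle N(\hat a)\rangle$ is connected (Claim 1), that every vertex lies on a triangle (Claim 2), and that a distance-$3$ path can be augmented to a specific $5$-vertex gadget (Claim 3); it then takes the connected component $K$ of $\langle N(\hat a)\cap N(\hat c)\rangle$ containing the edge $\hat b\hat x$ and shows, via a short case analysis on six-vertex configurations (each of which would contain an induced member of $\mathcal H$), that $V(K)$ is a Fine set with more than one element, contradicting $\hat{\hat G}=\hat G$. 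So the decisive tool in the hard case is the construction of a non-singleton Fine set, with $\mathcal H$-subgraphs used only to rule out bad local configurations; your proposal replaces this with an unproven (and possibly false, or at least much harder) ``forced induced subgraph'' statement, so as written the proof is incomplete precisely where the real work lies.
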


\begin{proof}
If $\hat{G}$ is a bipartite graph, then it is trivial that $D_2(\hat{G})$ is disconnected and by Theorem \ref{888}, $D_2(G)$ is disconnect.\\
Now, by the contrary, let $D_2(G)$ is disconnected but $\hat{G}$ contains an odd cycle. Let $\hat{H}\leqslant\hat{G}$ where $\hat{H}$ is a non-trival induced connected subgraph with $V(\hat{H})=\{\hat{x}_1, \hat{x}_2, \cdots, \hat{x}_t\}$   and $D_2(\hat{H})$ is connected. Obviously, $H=\langle\{x_1, x_2, \cdots , x_t\}\rangle$ is a connected induced subgraph of $G$ and $D_2(H)$ is connected too. By the proof of Lemma \ref{999}, there is an induced subgraph $K$ which containing $H$ and $[V(K),N_G(V(K))\setminus V(K)]$ is a subgraph of $G$. Then, $V(K)$ is Fine and then there is $x\in V(G)$ such that $V(H)\subseteq \hat{x}$, a contradiction.

Thus, $\hat{G}$ has no connected induced subgraph $\hat{H}$ with connected 2-distance graph.
Consequently, by Lemma \ref{23}, $\hat{G}$ has no induced subgraph belongs to  $\mathcal{H}$, specially, $\hat{G}$ has no odd cycle of length greater than $3$.\\
Let $\hat{G}$ has a triangle.\\
Claim 1: For any triangle $\hat{a}\hat{b}\hat{c}\hat{a}$ in $\hat{G}$, $\langle N(\hat a)\rangle$ is connected.
\\Proof: By a contrary, let $\langle N(\hat a)\rangle$ is disconnected and $H$ is the connected component of  $\langle N(\hat a)\rangle$ containing the edge $\hat b\hat c$. Let $\hat y \in N(\hat a)\setminus V(H)$. If $N(V(H))\subseteq N(\hat a)$, then $N(V(H))\setminus V(H)=\{\hat a\}$. Therefore, $V(H)$ is Fine and containing $\hat b$ and $\hat c$. By Remark \ref{rem213}, we have $\hat{b}=\hat{c}$, a contradiction. Then, we assume that  $\hat u\in N(V(H))\setminus N(\hat a)$. If $V(H)\not\subseteq N(\hat u)$ then since $H$ is connected, there are $\hat v,\hat w\in V(H)$ such that $\hat v\hat w\in H$ and $\hat v\in N(\hat u)$ and $\hat w\notin N(\hat u)$. Consequently, one can see that $\langle \{\hat y, \hat a,\hat v , \hat w,\hat u\}\rangle\in\mathcal{H}$, a contradiction. Thus, $V(H)\subseteq N(\hat u)$ for all $\hat u\in N(V(H))\setminus N(\hat a)$ and again $V(H)$ is Fine, which contradict to Remark \ref{rem213}. \\
Claim 2: Any vertex of $\hat{G}$ is on a triangle. \\
Proof: By contrary, there is a vertex $\hat{a}$ lies on no triangle. Since $\hat G$ is connected and has at least a triangle, then there is a vertex $\hat b$ and triangle $\hat x\hat y\hat z\hat x$ such that $\hat b\in N(\hat x)$ and $\hat{G}$ has no triangle containing $\hat b$, that is, $N(\hat b)\cap N(\hat x)=\emptyset$. Now, $\langle N(\hat x)\rangle$ and consequently $\langle N(x)\rangle$ is not connected, a contradiction by Claim 1.\\
Claim 3: $\hat G$ has a subgraph $\hat H$ with vertex set $V(\hat H)=\{\hat a, \hat b, \hat x, \hat c ,\hat d\}$ and $E(\hat H)=\{\hat a\hat b, \hat a\hat x, \hat b\hat x, \hat c\hat x, \hat b\hat c, \hat c\hat d\}$ where $d(\hat a,\hat d)=3$.\\
\begin{figure}[h]\label{fig1}
\begin{center}
\begin{tikzpicture}
\draw (1,-.7)--(-1,0)--(1,.7)--(3,0)--(5,0);
\draw (1,.7)--(1,-.7)--(3,0);
\filldraw (-1,0) circle(2pt);\filldraw (1,-.7) circle(2pt);
\filldraw (1,.7) circle(2pt);\filldraw (3,0) circle(2pt);\filldraw (5,00) circle(2pt);
\node at (-1.3,0) {$\hat a$};\node at (1,-1) {$\hat x$};\node at (1,1) {$\hat b$};\node at (3,-.3) {$\hat c$};\node at (5,-0.3) {$\hat d$};
\end{tikzpicture}
\end{center}
\end{figure}
Proof: Since $diam(\hat{G})\geqslant3$, there is a path $\hat a\hat b\hat c\hat d$ with $d(\hat a,\hat d)=3$. By Claim 2, $\hat b$ lies on a triangle and by claim 1, $\langle N(\hat b)\rangle$ is connected. Then there is $\hat z\in N(\hat b)\cap N(\hat c)$. If $\hat z\notin N(\hat a)\cup N(\hat d)$, then $\langle \{\hat a, \hat b, \hat z, \hat c, \hat d\}\rangle\in\mathcal{H}$, a contradiction. Therefore, $z\in N(\hat a)\cup N(\hat d)$ and by symmetry, we can assume $\hat z$ is adjacent to $\hat a$ and the proof is completed.

Now we have all the necessary information to continue the argument

 Let $H$ be the subgraph constructed in Claim 3. Let $K$ be a connected component of $\langle N(\hat a)\cap N(\hat c)\rangle$ containing the edge $bx$. Let $S_1=N(\hat a)\cap N(\hat c)\setminus V(K)$ and $S_2=N(\hat a)\setminus N(\hat c)$. We claim that $V(K)$ is Fine. If $\hat y\in N(V(K))\setminus N(\hat a)$ and $V(K)\not\subseteq N(\hat y)$, Since $K$ is connected, then there is $\hat u,\hat v\in V(K)$ such that $\hat u\hat v,\hat u\hat y\in E(\hat{G})$ but $\hat v\notin N(\hat y)$.

The subgraph $\langle \{\hat a,\hat c, \hat d, \hat u, \hat v, \hat y\}\rangle$ can be one of the four graphs shown in  the below.
\begin{center}
\begin{figure}[h]\label{fig2}
\begin{tikzpicture}
\draw (1,-.7)--(0,0)--(1,.7)--(2,0)--(3,0);
\draw (2.5,.7)--(1,.7)--(1,-.7)--(2,0);
\filldraw (0,0) circle(2pt);\filldraw (1,-.7) circle(2pt);\filldraw (2.5,.7) circle(2pt);
\filldraw (1,.7) circle(2pt);\filldraw (2,0) circle(2pt);\filldraw (3,00) circle(2pt);
\node at (-.3,0) {$\hat a$};\node at (1,-1) {$\hat u$};\node at (1,1) {$\hat v$};\node at (2,-.3) {$\hat c$};\node at (3,-0.3) {$\hat d$};\node at (2.5,1) {$\hat y$};
\end{tikzpicture}
\begin{tikzpicture}
\draw (1,-.7)--(0,0)--(1,.7)--(2,0)--(3,0);
\draw (2,0)--(2.5,.7)--(1,.7)--(1,-.7)--(2,0);
\filldraw (0,0) circle(2pt);\filldraw (1,-.7) circle(2pt);\filldraw (2.5,.7) circle(2pt);
\filldraw (1,.7) circle(2pt);\filldraw (2,0) circle(2pt);\filldraw (3,00) circle(2pt);
\node at (-.3,0) {$\hat a$};\node at (1,-1) {$\hat u$};\node at (1,1) {$\hat v$};\node at (2,-.3) {$\hat c$};\node at (3,-0.3) {$\hat d$};\node at (2.5,1) {$\hat y$};
\end{tikzpicture}
\begin{tikzpicture}
\draw (1,-.7)--(0,0)--(1,.7)--(2,0)--(3,0);
\draw (3,0)--(2.5,.7)--(1,.7)--(1,-.7)--(2,0);
\filldraw (0,0) circle(2pt);\filldraw (1,-.7) circle(2pt);\filldraw (2.5,.7) circle(2pt);
\filldraw (1,.7) circle(2pt);\filldraw (2,0) circle(2pt);\filldraw (3,00) circle(2pt);
\node at (-.3,0) {$\hat a$};\node at (1,-1) {$\hat u$};\node at (1,1) {$\hat v$};\node at (2,-.3) {$\hat c$};\node at (3,-0.3) {$\hat d$};\node at (2.5,1) {$\hat y$};
\end{tikzpicture}
\begin{tikzpicture}
\draw (1,-.7)--(0,0)--(1,.7)--(2,0)--(3,0)--(2.5,.7);
\draw (2,0)--(2.5,.7)--(1,.7)--(1,-.7)--(2,0);
\filldraw (0,0) circle(2pt);\filldraw (1,-.7) circle(2pt);\filldraw (2.5,.7) circle(2pt);
\filldraw (1,.7) circle(2pt);\filldraw (2,0) circle(2pt);\filldraw (3,00) circle(2pt);
\node at (-.3,0) {$\hat a$};\node at (1,-1) {$\hat u$};\node at (1,1) {$\hat v$};\node at (2,-.3) {$\hat c$};\node at (3,-0.3) {$\hat d$};\node at (2.5,1) {$\hat y$};
\end{tikzpicture}
\end{figure}
\end{center}
One can see that in each case, $\langle \{\hat a,\hat c, \hat d, \hat u, \hat v, \hat y\}\rangle$ contains an induced subgraph of $\hat G$ which  belongs to $\mathcal{H}$, a contradiction. 

If $\hat y\in S_2\cap N(V(K))$ and $V(K)\not\subseteq N(\hat y)$  , then there are $\hat u\hat v\in E(K)$ such that $\hat u\hat y, \hat u\hat v\in E(\hat{G})$ but $\hat v\notin N(\hat y)$. Then, $\langle \{\hat y, \hat u, \hat v, \hat c, \hat d\}\rangle\in\mathcal{H}$, a contradiction. Thus, $V(K)$ is Fine, again is a contradiction. Therefore $\hat{G}$ has no triangle which complete the proof.
\end{proof}

\end{document}